\newtheorem{theorem}{Theorem}
\newcommand{\tr}{\mbox{tr}}
\newtheorem{lemma}{Lemma}
\theoremstyle{remark}
\theoremstyle{definition}
\newcommand{\ep}{\varepsilon}
\newcommand{\lam}{\lambda}
\newcommand{\N}{\mathbb{N}}
\newcommand{\beq}{\begin{equation}}
\newcommand{\eeq}{\end{equation}}
\begin{document}

\title[]{On Matrix Rearrangement Inequalities}
\keywords{Rearrangement Inequality, Linear Operators, Matrix inequalities.}
\subjclass[2010]{15A45, 47A30, 47A63 (primary) and 39B42 (secondary).}

\author[]{Rima Alaifari, Xiuyuan Cheng, Lillian B. Pierce and Stefan Steinerberger}

\address[rima.alaifari@math.ethz.ch]{{Rima Alaifari}:  Department of Mathematics, ETH Z\"{u}rich, R\"{a}mistrasse 101, 8092 Z\"{u}rich}
\address[xiuyuan.cheng@yale.edu ..edu]{{Xiuyuan Cheng}: Department of Mathematics, Duke University, 120 Science Drive, Durham NC 27708}
\address[pierce@math.duke.edu]{{Lillian B. Pierce}: Department of Mathematics, Duke University, 120 Science Drive, Durham NC 27708}
\address[stefan.steinerberger@yale.edu]{{Stefan Steinerberger}: Department of Mathematics, Yale University, 10 Hillhouse Avenue, New Haven, 06511 CT}

\thanks{R.A. thanks David Gontier for fruitful discussions. X.C. is partially supported by the NSF (DMS-1818945,  DMS-1820827). L.P. is partially supported by CAREER grant NSF DMS-1652173 and the Alfred P. Sloan Foundation.  S.S. is partially supported by the NSF (DMS-1763179) and the Alfred P. Sloan Foundation.}

\begin{abstract} Given two symmetric and positive semidefinite square matrices $A, B$,   is
it true that any matrix given as the product of $m$ copies of $A$ and $n$ copies of $B$ in a particular sequence must be dominated in the spectral norm by the ordered matrix product $A^m B^n$? For example, is 
$$ \|  AABAABABB \| \leq  \|  AAAAABBBB \| ?$$
Drury \cite{drury} has characterized precisely which disordered words have the property that an inequality of this type holds for all matrices $A,B$. However, the $1$-parameter family of  counterexamples Drury constructs  for these characterizations is comprised of $3 \times 3$ matrices, and thus as stated the characterization applies only for $N \times N$ matrices with $N \geq 3$.
In contrast, we prove that for $2 \times 2$ matrices, the  general rearrangement inequality holds for all disordered words. We also show that for larger $N \times N$ matrices, the general rearrangement inequality holds for all disordered words, for most $A,B$ (in a sense of full measure) that are sufficiently small perturbations of the identity.
 \end{abstract}

\maketitle

\section{Introduction }
\subsection{Introduction.} 

Rearrangement inequalities for functions have a long history; we refer to Lieb and Loss \cite{lieb} for an introduction and an example of their ubiquity in Analysis, Mathematical Physics, and Partial Differential Equations. A natural question that one could ask is whether there is an operator-theoretic variant of such rearrangement inequalities. For example, given two operators $A:X \rightarrow X$ and $B:X \rightarrow X$, is there an inequality 
$$ \|  ABABA \| \leq  \|  AAABB \| $$
where $\| \cdot \|$ is a norm on operators? 
In this paper, we will study the question for $A,B$ being symmetric and positive semidefinite square matrices and $\| \cdot \|$ denoting the classical operator norm 
$$\| M \| =  \sup_{\|x\|_2=1} \|Mx \|_2.$$

We are interested in whether one could hope for a statement of the  general type
$$
	\|A^{m_1} B^{n_1} A^{m_2} B^{n_2} \cdots A^{m_s} B^{n_s} \| \leq \|A^m B^n\|,
$$
where  
\[ m=\sum_{j=1}^s m_j, \qquad n=\sum_{j=1}^s n_j ,\]
 with $m_j,n_j$  positive integers (except that we allow $m_1 = 0$ or $n_s=0$).
Of course, if the operators commute then any such inequality is trivially an equality.
A reason why one might hope in general for such a statement to be true is that one could expect the repeated application of only one operator to lead to growth (or at least preservation) of the norms of suitable eigenvectors, while alternating applications of two operators could have the effect of  projecting alternately onto two possibly different eigenbases, thus losing size of the eigenvectors.  

\subsection{Known results.} There are several encouraging results in this direction, some of which are by now classical in Operator Theory, and have been extended in a variety of different ways. We note:

\begin{itemize}
\item \textbf{Heinz-L\"owner inequality} (Heinz \cite{heinz}, 1951), (L\"owner \cite{lowner}, 1934) stating that
$$ \left\| A B A \right\| \leq  \left\| A A B \right\|.$$

\item \textbf{Heinz-Kato inequality} (Heinz \cite{heinz}, 1951), (Kato \cite{kato}, 1952). If $A,B$ are positive operators and $T$ is a linear operator such that $\| Tx \| \leq \|Ax\|$ and
$\|T^{*} y\| \leq \|By\|$ for all $x,y$ in a Hilbert space, then
$$ \left| \left\langle Tx, y\right\rangle \right|  \leq \left\| A^{\alpha} x \right\| \left\|B^{1-\alpha} y\right\|   \qquad \mbox{for all}~0 \leq \alpha \leq 1.$$

\item \textbf{Cordes inequality} (Cordes \cite{cord}, 1987). For all symmetric and positive definite $A,B$ and all $0 \leq s \leq 1$
$$ \left\| A^s B^s\right\| \leq \|AB\|^s.$$

\item \textbf{McIntosh's inequality} (McIntosh \cite{mc}, 1979) generalizes several of the earlier results and shows that for $A,B$ as above and $X$ an arbitrary square matrix of the same size,
$$\label{McIntosh_L2}
 \| AXB \| \leq \|A^2X\|^{1/2} \|X B^2\|^{1/2}.
$$
The last author characterized equality for several of these inequalities in \cite{stein}.

\item \textbf{Furuta's inequality}  \cite{furuta1} (see also \cite{cord}) shows that for any $n \geq 1$
$$\label{Furuta_orig2}
	\|AB\|^n \leq \|A^{n} B^{n}\|.
$$
\end{itemize}

There is a large literature connected to these inequalities; we refer to \cite{andr, corach1, dix, fu, ge, heinz2} as well as the books by Bhatia \cite{bhatia, bhatia2}, Cordes \cite{cord}, Furuta \cite{furuta2}, Marshall, Olkin \& Arnold \cite{marshall}, Simon \cite{simon} and Zhan \cite{zhan}. Many open problems remain. The authors themselves were motivated by a conjecture of Recht and R\'e \cite{recht} who asked whether, for $n$ positive definite matrices $A_1, \dots, A_n$, there is an inequality 
$$ \frac{1}{n^m} \left\| \sum_{j_1, \dots, j_m = 1}^{n}{A_{j_1} \dots A_{j_m} } \right\| \geq \frac{(n-m)!}{n!} \left\| \sum_{j_1, \dots, j_m = 1 \atop \mbox{\tiny all distinct} }{ A_{j_1} A_{j_2} \cdots A_{j_m}} \right\|.$$
Recht and R\'e \cite{recht} proved the inequality for $n=m=2$; Zhang \cite{zhang} recently gave a proof for $m=3$ and $n \geq 6$ being a multiple of 3. Israel, Krahmer and Ward \cite{israel} prove the inequality for $n=3$; we also refer to recent work of Albar, Junge and Zhao \cite{albar}. One way of interpreting the conjectured inequality of Recht and R\'e is that repetition of matrices has a beneficial effect on the operator norm; this leads  to asking about matrix rearrangement inequalities, as studied in this paper.

\subsection{Statement of results.} Consider a putative inequality
\beq\label{gen_ineq}
\| A^{m_1} B^{n_1} A^{m_2} B^{n_2} \cdots A^{m_s} B^{n_s}\| \leq \|A^{m} B^{n}\|,
\eeq
where $m=\sum_i m_i, n=\sum_i n_i$ and $m_i, n_i \in \mathbb{N}$ (possibly allowing $m_1=0$ or $n_s=0$) and $A,B$ are symmetric and positive semidefinite square matrices. Is it true that given any ``word,'' that is, a tuple of exponents $(m_1,n_1,\ldots,m_s,n_s)$, the inequality (\ref{gen_ineq}) holds for all such $A,B$?
 Drury \cite{drury} has shown that at this level of generality, the question has a negative answer. Moreover, he provides a complete characterization of conditions on the exponents $(m_1,n_1,\ldots,m_s,n_s)$ for which such an inequality holds for all such $A,B$ (of all dimensions).  For example, Drury shows that we always have
$$ \| AABBABBAABBAA\| \leq \|A^7 B^6\|$$
while the inequality
$$\|AABABB\| \leq \|A^3B^3\|   \quad \mbox{can fail for certain}~ A,B.$$
The counterexamples given by Drury to the general rearrangement inequality   stem from a 1-parameter family of $3 \times 3$ matrices. 
In contrast, our first main result is that the general rearrangement inequality does indeed hold true \emph{for any word, for all $2 \times 2$ symmetric positive semidefinite matrices.}
\begin{theorem}[General Rearrangement Inequality for $2\times 2$ Matrices]
Let $A, B$ be symmetric positive semidefinite matrices of size $2 \times 2$ and let $m_i, n_i \in \mathbb{N}$ (possibly allowing $m_1=0$ or $n_s=0$). Then
$$ \| A^{m_1} B^{n_1} A^{m_2} B^{n_2} \cdots A^{m_s} B^{n_s}\| \leq \|A^{m} B^{n}\|,$$
where $m=\sum_i m_i$ and $n= \sum_i n_i$.
\end{theorem}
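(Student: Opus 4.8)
The plan is to exploit the fact that in two dimensions a matrix is essentially determined by its norm and its determinant, together with the observation that the determinant is insensitive to rearrangement. Write $W = A^{m_1}B^{n_1}\cdots A^{m_s}B^{n_s}$ for the disordered word. Since the determinant is multiplicative,
$$\det W = (\det A)^m (\det B)^n = \det(A^m B^n),$$
so $W$ and the ordered product $A^mB^n$ have \emph{exactly} the same determinant; write $d \geq 0$ for its value (nonnegative since $A,B$ are positive semidefinite). For any $2\times 2$ matrix $M$ the two singular values obey $\sigma_1\sigma_2 = |\det M|$ and $\sigma_1^2 + \sigma_2^2 = \|M\|_F^2 = \tr(M^T M)$, whence
$$\|M\|^2 = \sigma_1^2 = \tfrac12\Big(\|M\|_F^2 + \sqrt{\|M\|_F^4 - 4d^2}\Big),$$
which, once $d$ is fixed, is a strictly increasing function of $\|M\|_F^2$ (note $\|M\|_F^2 \geq 2d$ always). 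Applying this to $M = W$ and $M = A^mB^n$, which share the same $d$, reduces the theorem to the Frobenius-norm inequality $\|W\|_F \leq \|A^mB^n\|_F$.

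Squaring and using that $A,B$ are symmetric together with cyclicity of the trace, this is equivalent to
$$\tr(W^T W) \leq \tr(A^{2m}B^{2n}).$$
Here $W^TW$ is again a word in $A,B$, of content $(2m,2n)$ and palindromic. Thus it suffices to prove the following trace-rearrangement statement: for symmetric positive semidefinite $2\times 2$ matrices $A,B$ and any exponents, the ordered word maximizes the trace,
$$\tr\big(A^{p_1}B^{q_1}\cdots A^{p_s}B^{q_s}\big) \leq \tr\big(A^{p}B^{q}\big), \qquad p = \textstyle\sum_t p_t,\ q = \sum_t q_t,$$
applied with content $(2m,2n)$ to the word $W^TW$.

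For this trace inequality I would first normalize: conjugating $A,B$ by a fixed orthogonal matrix changes neither traces nor the hypotheses, so I may take $A = \diag(a_1,a_2)$ and $B = O\,\diag(b_1,b_2)\,O^T$ with $O$ a single rotation through an angle $\theta$. Inserting the spectral resolutions of $A$ and $B$ expands each trace as a sum over assignments of eigen-indices, each summand being a product of eigenvalue powers $\prod_t a_{i_t}^{p_t}b_{j_t}^{q_t}$ times a product of overlaps $O_{ij}\in\{\cos\theta,\pm\sin\theta\}$ taken around the cyclic word. For the ordered word every overlap occurs squared, so all summands are nonnegative; for a general word, signed cross terms appear. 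The goal is then to show these cancellations can only lower the trace, which I would attack by induction on the number of syllables $s$ via a local move merging two adjacent blocks inside the cyclic trace (comparing $\cdots B^{q}A^{p}\cdots$ with the more-ordered $\cdots A^{p}B^{q}\cdots$), each move reducing — after the single-angle normalization — to an explicit polynomial inequality in $\cos^2\theta$, $\sin^2\theta$ and the monotone eigenvalue powers.

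The clean and easy part is the passage to the trace via the determinant, which is exactly what makes the two-dimensional case special and which has no analogue in higher dimensions, consistent with Drury's $3\times 3$ counterexamples. The main obstacle is the trace-rearrangement inequality itself, precisely because of the signed cross terms: the naive assertion that an isolated adjacent swap $B^qA^p \to A^pB^q$ raises the trace is \emph{false} without reference to the surrounding word, so the argument must genuinely use the global two-dimensional structure — a single relative angle $\theta$ between the two eigenbases — to control all terms simultaneously. I expect essentially all of the real work to lie in establishing this trace inequality.
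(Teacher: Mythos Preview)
Your reduction is exactly the paper's: equal determinants give $\mu_1\mu_2=\lambda_1\lambda_2$ for the singular values of $W$ and of $A^mB^n$, and then the operator-norm comparison becomes the Frobenius-norm (equivalently, trace) comparison $\tr(W^TW)\le \tr(B^nA^{2m}B^n)=\tr(A^{2m}B^{2n})$. The paper phrases the final deduction as $\mu_1+\mu_2\le\lambda_1+\lambda_2$ together with $\mu_1\mu_2=\lambda_1\lambda_2$ implying $\mu_1\le\lambda_1$, which is the same monotonicity you wrote down via $\sigma_1^2=\tfrac12(\|M\|_F^2+\sqrt{\|M\|_F^4-4d^2})$.

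The one substantive difference is that the paper does \emph{not} prove the trace-rearrangement inequality from scratch: it quotes it as Corollary~4.4 of Ando, Hiai and Okubo, \emph{Trace inequalities for multiple products of two matrices}, Math.\ Inequal.\ Appl.\ \textbf{3} (2000), 307--318, valid for positive semidefinite matrices with at most two distinct eigenvalues (hence all $2\times2$ matrices). So the hard lemma you isolated is already in the literature, and the paper simply cites it. Your proposed direct argument --- diagonalize $A$, write $B$ via a single rotation angle $\theta$, expand in the two eigenbases, and induct on the number of syllables via a local merge --- is a reasonable plan but, as you yourself note, the naive local swap fails in isolation, and you have not actually carried out the global argument. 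That is the only place your proposal remains a sketch rather than a proof; everything else is complete and matches the paper's route.
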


In light of Drury's results, there is no hope for such general inequalities in higher dimensions. Nonetheless, one could wonder whether there is hope that, given any word $(m_1,n_1,\ldots, m_s,n_s)$, a rearrangement inequality should hold for \textit{some} (or maybe even \textit{most}) pairs of $N\times N$ matrices $(A,B)$.
This is the motivation for our second result, which states that given any word, the rearrangement inequality is generically true for $N \times N$ matrices in a sufficiently small neighborhood of the identity, for all $N \geq 2$. 
\begin{theorem}[General Rearrangement close to the Identity, arbitrary dimension]
Let $A, B$ be symmetric positive semidefinite matrices and let $m_i, n_i \in \mathbb{N}$ (possibly allowing $m_1=0$ or $n_s=0$). If $\mbox{ker}(AB - BA) = \emptyset$, then there exists $\varepsilon_0 = \ep_0(A,B,m,n) > 0$  such that for all $0 < \varepsilon < \varepsilon_0$
$$ \| (\emph{Id} + \varepsilon A)^{m_1}  (\emph{Id} + \varepsilon B)^{n_1} \cdots  (\emph{Id} + \varepsilon A)^{m_s}  (\emph{Id} + \varepsilon B)^{n_s}\| \leq \| (\emph{Id} + \varepsilon A)^{m}  (\emph{Id} + \varepsilon B)^{n}\|,
$$
where $m=\sum_i m_i$ and $n=\sum_i n_i$.
\end{theorem}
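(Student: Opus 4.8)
The plan is to Taylor-expand both products around $\varepsilon=0$ and compare the squared norms $\|O(\varepsilon)\|^2=\lambda_{\max}(O(\varepsilon)^{T}O(\varepsilon))$ and $\|W(\varepsilon)\|^2=\lambda_{\max}(W(\varepsilon)^{T}W(\varepsilon))$, where $O(\varepsilon)=(\mathrm{Id}+\varepsilon A)^m(\mathrm{Id}+\varepsilon B)^n$ and $W(\varepsilon)$ is the word in question. Writing the word as $W(\varepsilon)=\prod_{i=1}^{m+n}(\mathrm{Id}+\varepsilon X_i)$ with each $X_i\in\{A,B\}$, we have $W(\varepsilon)=\mathrm{Id}+\varepsilon L+\varepsilon^2\sum_{i<j}X_iX_j+\varepsilon^3\sum_{i<j<k}X_iX_jX_k+\cdots$ with $L=mA+nB$. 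First I would record two word-independence facts: the linear term is $L$ for every word, and the symmetric part of $\sum_{i<j}X_iX_j$ equals $\tfrac12(L^2-mA^2-nB^2)$, independently of the word. Consequently $W^{T}W$ and $O^{T}O$ agree through order $\varepsilon^2$, and the comparison is decided only at order $\varepsilon^3$.

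Next I would apply first-order eigenvalue perturbation theory. Writing $W^{T}W=\mathrm{Id}+2\varepsilon L+\varepsilon^2 M_2+\varepsilon^3 P_W+\cdots$ with $L,M_2$ word-independent and letting $v$ be a unit top eigenvector of $L$, say $Lv=\mu v$ with $\mu=\|L\|$, one obtains $\|O\|^2-\|W\|^2=\varepsilon^3\langle v,(P_O-P_W)v\rangle+O(\varepsilon^4)$. A direct computation gives $P_O-P_W=(W_3^{O}+W_3^{O\,T}-W_3^{W}-W_3^{W\,T})+[L,D]$, where $W_3$ denotes the cubic coefficient and $D=\tfrac{mn-k_W}{2}(AB-BA)$ is the difference of the antisymmetric quadratic parts, with $k_W$ counting signed $A$-before-$B$ pairs in the word. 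The key structural observation is that the term $[L,D]$, which encodes all of the naive ordering information through $k_W\le mn$, contributes nothing to the leading difference: since $v$ is an eigenvector of the symmetric matrix $L$, we have $\langle v,[L,D]v\rangle=0$ for every $D$. Thus the entire leading-order comparison is controlled by the symmetric cubic term $\langle v,(W_3^{O}-W_3^{W})v\rangle$.

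To evaluate this I would expand $\langle v,W_3\,v\rangle=\sum_{i<j<k}\langle v,X_iX_jX_k\,v\rangle$ and group the position-triples by their letter pattern in $\{A,B\}^3$. The patterns $AAA,BBB$ are word-independent and cancel; pairing each remaining pattern with its reverse (these agree under $\langle v,\cdot\,v\rangle$ because $v$ is real) and substituting $nBv=\mu v-mAv$ collapses everything to
\[
\langle v,(W_3^{O}-W_3^{W})v\rangle=w\Big(c_W(ABA)+\tfrac{m}{n}\,c_W(BAB)\Big),
\]
where $c_W(\cdot)$ counts a pattern as a subsequence of the word and $w=\langle v,(A^2B-ABA)v\rangle=\langle v,A(AB-BA)v\rangle$. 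The two counts are nonnegative and vanish simultaneously only for the sorted words $A^mB^n$ and $B^nA^m$ (which give equality); any genuinely disordered word has at least one positive count. The crux is therefore the sign of $w$, and here the decisive identity is
\[
\langle v,[A,[A,B]]\,v\rangle=\tfrac{2}{n}\,\langle Av,(\mu\,\mathrm{Id}-L)Av\rangle ,
\]
obtained by repeatedly using $nBv=\mu v-mAv$ so that each $B$ lands on $v$. Since $\mu=\lambda_{\max}(L)$, the matrix $\mu\,\mathrm{Id}-L$ is positive semidefinite, whence $w=\tfrac1n\langle Av,(\mu\,\mathrm{Id}-L)Av\rangle\ge 0$, and the inequality follows for small $\varepsilon$.

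For strictness one notes that $w=0$ forces $Av$ into the top eigenspace of $L$; when $\mu$ is simple this makes $v$ a common eigenvector of $A$ and $B$, so $(AB-BA)v=0$, contradicting $\ker(AB-BA)=\{0\}$. Hence $w>0$, the $\varepsilon^3$ coefficient is strictly positive for every disordered word, and $\|O(\varepsilon)\|>\|W(\varepsilon)\|$ for all sufficiently small $\varepsilon>0$. The step I expect to be the main obstacle is not this positivity identity, which is clean, but the degenerate case in which $\lambda_{\max}(L)$ has multiplicity greater than one: then the perturbation analysis must be carried out on the full top eigenspace $E_\mu$, the relevant vector is singled out inside $E_\mu$ by the word-independent second-order term $M_2$, and one must verify that the positivity and strictness persist there. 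The identity still yields $\langle v,[A,[A,B]]v\rangle\ge 0$ for all $v\in E_\mu$; the delicate point is to exclude the scenario in which $E_\mu$ is simultaneously $A$- and $B$-invariant — which would annihilate $w$ on $E_\mu$ and defer the comparison to higher order — by restricting the entire problem to $E_\mu$ and arguing inductively on the dimension.
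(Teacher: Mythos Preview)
Your approach and the paper's share the same decisive computation: your identity $w=\tfrac{1}{n}\langle Av,(\mu\,\mathrm{Id}-L)Av\rangle\ge 0$ is precisely the content of the paper's Lemma~2, and your reduction of the cubic difference to $w\cdot\bigl(c_W(ABA)+\tfrac{m}{n}\,c_W(BAB)\bigr)$ matches the paper's third-order bookkeeping. The one substantive difference is in packaging. You invoke eigenvalue perturbation theory for $\lambda_{\max}(W^TW)$ and therefore run into the degeneracy question for $\lambda_{\max}(L)$ that you flag at the end. The paper sidesteps this entirely: instead of tracking the top eigenvalue analytically, it fixes the unit vector $v_\varepsilon$ that maximizes $\|W(\varepsilon)u\|$ and proves the \emph{pointwise} inequality $\|W(\varepsilon)v_\varepsilon\|^2\le\|O(\varepsilon)v_\varepsilon\|^2$, which immediately yields $\|W(\varepsilon)\|\le\|O(\varepsilon)\|$. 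A short lemma (Lemma~1) shows that $v_\varepsilon$ lies within $O(\varepsilon)$ of the \emph{full} top eigenspace $E_\mu$, and Lemma~2 then supplies a uniform lower bound $w\ge C_2>0$ over all unit $v\in E_\mu$ by compactness. No simplicity of $\mu$ is ever used.

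On your acknowledged gap: your strictness argument actually goes through without assuming $\mu$ is simple. If $w=0$ for some unit $v\in E_\mu$, then $Av\in E_\mu$, so $L(Av)=\mu Av$; on the other hand $A(Lv)=\mu Av$, and subtracting gives $[L,A]v=n[B,A]v=0$, i.e.\ $(AB-BA)v=0$, contradicting $\ker(AB-BA)=\emptyset$. Thus $w>0$ for \emph{every} unit $v\in E_\mu$, and the scenario you worry about ($E_\mu$ simultaneously $A$- and $B$-invariant) is already excluded by the hypothesis; no induction on dimension is needed. With this uniform positivity in hand, either your perturbation-theoretic framing can be completed (all lower-order data $L,M_2$ being word-independent, the third-order difference on $E_\mu$ is positive definite), or --- more simply --- you can adopt the paper's pointwise comparison and avoid the degeneracy bookkeeping altogether.
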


Thus given any fixed word, this provides a codimension 1 family of $(A,B)$ among all relevant pairs of $N \times N$ matrices in the neighborhood of the identity, which satisfy the   rearrangement inequality for that word.
We do not know whether the condition $\mbox{ker}(AB - BA) = \emptyset$ is necessary but are inclined to think that it may not be. 

There are many other natural questions that come to mind. The rearrangement inequalities are invariant under multiplication with constants, which allows us to compactify the set of matrices: are such inequalities \textit{generically} true (in, say, the sense that the measure of admissible matrices approaches full measure as the length of the inequality, or the number $s$, increases)?  Another question could be to determine other simple conditions on the matrices (other than assuming that they commute) that would imply the desired rearrangement inequalities hold.

\section{Proof of Theorem 1}
Our proof uses three different ingredients. 
The first ingredient is
Corollary 4.4 in a paper of Ando, Hiai \& Okubo \cite{ando} which states the following: let $C,D$ be symmetric positive semidefinite matrices of size $2\times2$ and for $i=1, \dots, k$, let $p_i, q_i \geq 0$ satisfy
 $$p_1 + \cdots +p_k = 1 = q_1 + \cdots + q_k;$$ 
then 
\begin{equation}\label{ineq_ando}
\tr(C^{p_1} D^{q_1} \cdots C^{p_k} D^{q_k}) \leq \tr(CD).
\end{equation}
We remark that Ando, Hiai \& Okubo \cite{ando} were motivated by the question whether such a trace inequality might be true in general: they establish the result for general positive semidefinite matrices that have at most two distinct eigenvalues. Plevnik \cite{plevnik} recently constructed an example showing that (\ref{ineq_ando}) can fail for $3 \times 3$ matrices.

The second ingredient is the invariance of trace with respect to cyclic permutations, i.e.
\beq\label{cyclic}
 \tr(A_1 A_2 \cdots A_n) = \tr(A_2 A_3 \cdots A_{n-1} A_n A_1).
\eeq
The third ingredient is the basic equation
\beq\label{max}
 \|A\|^2 = \max_{\|x\|=1}{\left\langle Ax, Ax \right\rangle} = \max_{\|x\|=1}{\left\langle A^T Ax, x \right\rangle} = \lambda_{\max}(A^T A),
 \eeq
where $\lambda_{\max}$ denotes the largest eigenvalue of a matrix.

Let $A,B$ be symmetric positive semidefinite matrices of size $2 \times 2$. 
Consider now a general word 
\beq\label{word}
W_{m,n}(A,B) := A^{m_1} B^{n_1} \cdots A^{m_s} B^{n_s}
\eeq
 where 
$$m = m_1 + \cdots + m_s \quad \mbox{and} \quad n = n_1 + \cdots + n_s.$$
Assume the symmetric matrices $B^m A^{2n} B^m$ and  $W_{m,n}(A,B)^T W_{m,n}(A,B)$ have eigenvalues (not necessarily distinct) given by
$$   \sigma(B^n A^{2m} B^n) = \left\{\lambda_1, \lambda_2\right\} \qquad \mbox{and} \qquad \sigma( W_{m,n}(A,B)^T W_{m,n}(A,B))=\left\{ \mu_1, \mu_2\right\}.$$
We note that all these eigenvalues are nonnegative. 
Moreover, assuming the ordering $\lambda_1 \geq \lambda_2$ and $\mu_1 \geq \mu_2$, we have by (\ref{max}) that
$$ \lambda_1 = \|A^m B^n\| ^2 \qquad \mbox{and} \qquad \mu_1 = \| W_{m,n}(A,B)\|^2.$$
Thus to prove Theorem 1, it suffices to show that 
\beq\label{thm1_max}
 \mu_1 \leq \lambda_1.
\eeq
 Defining $C= A^{2m}, D=B^{2n}$, we employ the cyclic identity (\ref{cyclic}) followed by (\ref{ineq_ando}) with $k=2s$ and
 \[ (p_1,q_1,\ldots, p_s,q_s) = \left(\frac{m_s}{2m}, \frac{n_{s-1}}{2n}, \cdots, \frac{n_1}{2n}, \frac{2m_1}{2m}, \frac{n_1}{2n}, \ldots, \frac{m_s}{2m}, \frac{2n_s}{2n}\right),\]
followed by a second application of the cyclic identity (\ref{cyclic})
 to obtain
\begin{align*}
\tr(W_{m,n}(A,B)^T W_{m,n}(A,B)) &= \tr(A^{m_s} B^{n_{s-1}} \cdots B^{n_1} A^{2 m_1} B^{n_1} \cdots A^{m_s} B^{2 n_s}) \\
&\leq \tr(C D) = \tr(B^n A^{2m} B^n).
\end{align*}
Since the trace is merely the sum of the eigenvalues, this shows that
\beq\label{trace_add}
 \mu_1 + \mu_2 \leq \lambda_1 + \lambda_2.
 \eeq
 On the other hand, the determinant is multiplicative, and so
\beq\label{det}
 \lambda_1 \cdot \lambda_2 = \det(B^n A^{2m} B^n) = \det(W_{m,n}(A,B)^T W_{m,n}(A,B)) = \mu_1 \cdot \mu_2.
 \eeq
 It is simple to deduce from these two relations that (\ref{thm1_max}) must hold.
 
 Indeed, if $\mu_1 = 0$, we have the desired result (\ref{thm1_max}). If $\mu_1 \neq 0$ but $\mu_2 = 0$ then either $\lambda_1$ or $\lambda_2$ must vanish, by (\ref{det}). If $\lambda_1=0$, then $\lambda_1 \geq \lambda_2$ implies that $\lambda_2=0$ and we have a contradiction to (\ref{trace_add}). Thus in this case we must have $\lambda_2= 0$, and then the desired inequality (\ref{thm1_max}) follows from (\ref{trace_add}). It remains to deal with the case when $\lambda_1$ and $\lambda_2$ are both nonzero, which implies that $\mu_1 \geq \mu_2 > 0$.
Suppose contrary to (\ref{thm1_max}) that $\mu_1 = \lambda_1 + \delta_1$ for some $\delta_1 > 0$; then (\ref{trace_add}) implies that  $\mu_2 = \lambda_2 - \delta_2$ for some $\delta_2 \geq \delta_1 > 0$. Then by (\ref{det}),
\begin{align*}
\lambda_1 \lambda_2 &= \mu_1 \mu_2 = (\lambda_1 + \delta_1)(\lambda_2 - \delta_2)\\
&= \lambda_1 \lambda_2 +\delta_1 \lambda_2 - \lambda_1 \delta_2 - \delta_1 \delta_2\\
&\leq \lambda_1 \lambda_2 - \delta_1\delta_2
\end{align*}
which is the desired contradiction. (Alternatively one can use (\ref{trace_add}) and (\ref{det}) to prove, using induction and repeated squaring of both sides of (\ref{trace_add}), that for any $k \in \mathbb{N}$, $mu_1^{2^k} + \mu_2^{2^k} \leq \lambda_1^{2^k} + \lambda_2^{2^k}.$
For such  expressions the leading term is asymptotically dominant and this shows $\mu_1 \leq \lambda_1$.)
 This verifies (\ref{thm1_max}) and hence completes the proof of Theorem 1.

\section{Proof of Theorem 2}\label{sec_thm_2}
Let $A,B$ be fixed symmetric positive semidefinite $N \times N$ matrices, and assume that the tuple of exponents $(m_1,n_1,\ldots, m_{s},n_s)$ is fixed, with $m=\sum m_i$ and $n = \sum n_i$. Let $W_{m,n}(\mbox{Id}+\ep A,\mbox{Id}+\ep B)$ denote the corresponding word in terms of $\mbox{Id}+\ep A,\mbox{Id}+\ep B$, analogous to (\ref{word}). 
The proof idea can be summarized as follows. Let $X_\ep$ denote $W_{m,n}(1+\ep A, 1+\ep B)$, and let $Z_\ep$ denote $(\mbox{Id}+\ep A)^m (\mbox{Id}+\ep B)^n$. We will choose a vector $v_\ep$ with $\|v_\ep \|=1$ that maximizes 
\[\| X_\ep v_\ep \|^2 = \langle X_\ep  v_\ep , X_\ep v_\ep \rangle.\]
Then as long as we can show that for this $v_\ep$ we have 
\beq\label{main_goal}
\|X_\ep v_\ep \|^2 \leq \| Z_\ep v_\ep \|^2,
\eeq
we can conclude that 
\beq\label{main_goal_app}
 \|X_\ep\| =  \| X_\ep v_\ep\| \leq \| Z_\ep v_\ep \| \leq \sup_{\|v\|=1} \| Z_\ep v\| = \| Z_\ep\|,
\eeq
thus proving Theorem 2. 

By simply multiplying out $\| X_\ep v_\ep \|^2$ and $\| Z_\ep v_\ep \|^2$, we will see that the leading order terms (in $\ep$)  come in both cases from a matrix of the form
$$ \mbox{Id} + \varepsilon(m A + n B) + \mathcal{O}(\varepsilon^2).$$
This motivates us to show that a significant proportion of $v_\ep$ must lie in the eigenspace of the largest eigenvalue of the matrix $mA + nB$ (Lemma 1 below). This observation will suffice to examine terms up to second order in $\ep$ in the desired inequality (\ref{main_goal}). Next, to treat the terms of third order and higher in $\ep$, we will use a second lemma (Lemma 2 below), which shows that if $\ker(AB-BA)=\emptyset$, for an eigenvector corresponding to the largest eigenvalue of $mA+nB$, the third order terms provide a strict inequality. This therefore allows us to neglect all higher order terms in $\ep$ (as long as $\ep$ is sufficiently small), and that leads to the desired inequality (\ref{main_goal}). 

%

\subsection{Two Lemmata}
Our first lemma states that a one-parameter family of matrices that
is approximately given by the identity plus a small linear term $\ep Y$ has the property that the eigenvector corresponding to its largest eigenvalue is necessarily very close to the leading
eigenspace of the linear perturbation $Y$. This statement is certainly not novel, but we provide its simple proof.

\begin{lemma} Let $X_{\varepsilon} = \emph{Id} + \varepsilon Y+ \mathcal{O}(\varepsilon^2)$, where $Y$ is a symmetric positive semidefinite matrix and $\ep$ varies, giving a one-parameter family. For each $\ep>0$ let $v_{\varepsilon}$ be a
vector satisfying $\| v_{\varepsilon}\| =1$ and
$$ \|X_{\varepsilon} v_{\varepsilon}\| = \|X_{\varepsilon}\|.$$
Let $\pi_{}$ be the orthogonal projection onto the eigenspace of the largest eigenvalue of $Y$.
Then there exists a constant $C_1 = C_1(Y)>0$ and  also $\ep_0 = \ep_0(Y)>0$ such that for every $0<\ep< \ep_0$,  
 $$ \| \pi_{} v_{\varepsilon}  \| \geq 1 - C_1 \varepsilon.$$ 
\end{lemma}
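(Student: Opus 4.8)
The plan is to analyze the largest eigenvalue of $X_\ep$ and compare it to the spectral decomposition of $Y$. Write $Y = \lambda_1 \pi + Y'$ where $\lambda_1 = \lambda_{\max}(Y)$, $\pi$ is the orthogonal projection onto the top eigenspace, and $Y'$ is the part supported on the orthogonal complement, so that every eigenvalue of $Y'$ is at most $\lambda_2 < \lambda_1$ (the second-largest eigenvalue of $Y$; if $\lambda_1$ is degenerate we simply take the full top eigenspace so that the spectral gap to $Y'$ is strictly positive). Since $X_\ep$ is symmetric up to $\mathcal{O}(\ep^2)$ and close to the identity, its largest eigenvalue satisfies $\|X_\ep\| = 1 + \ep\lambda_1 + \mathcal{O}(\ep^2)$: indeed, testing $X_\ep$ against a fixed unit vector $w$ in the top eigenspace of $Y$ gives $\langle X_\ep w, w\rangle = 1 + \ep\lambda_1 + \mathcal{O}(\ep^2)$, which forces $\|X_\ep\| \geq 1 + \ep\lambda_1 + \mathcal{O}(\ep^2)$, while the upper bound follows from $\|Y\| = \lambda_1$.

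Next I would exploit the maximizing property of $v_\ep$. Decompose $v_\ep = \pi v_\ep + (I-\pi)v_\ep$ and write $a^2 = \|\pi v_\ep\|^2$, $b^2 = \|(I-\pi)v_\ep\|^2$, with $a^2 + b^2 = 1$. The key computation is to expand
\begin{equation}
\langle X_\ep^T X_\ep v_\ep, v_\ep\rangle = 1 + 2\ep\langle Y v_\ep, v_\ep\rangle + \mathcal{O}(\ep^2),
\end{equation}
and to bound the linear term using the decomposition: since $Y v_\ep = \lambda_1 \pi v_\ep + Y'(I-\pi)v_\ep$ and the two components are orthogonal, one gets
\begin{equation}
\langle Y v_\ep, v_\ep\rangle \leq \lambda_1 a^2 + \lambda_2 b^2 = \lambda_1 - (\lambda_1 - \lambda_2) b^2.
\end{equation}
Because $v_\ep$ maximizes $\|X_\ep v_\ep\|^2 = \|X_\ep\|^2 = 1 + 2\ep\lambda_1 + \mathcal{O}(\ep^2)$, comparing the two expansions yields $1 + 2\ep\lambda_1 - 2\ep(\lambda_1-\lambda_2)b^2 + \mathcal{O}(\ep^2) \geq 1 + 2\ep\lambda_1 + \mathcal{O}(\ep^2)$, which after cancellation forces $2\ep(\lambda_1-\lambda_2)b^2 \leq \mathcal{O}(\ep^2)$, hence $b^2 \leq C\ep$ for a constant $C = C(Y)$ depending on the spectral gap $\lambda_1 - \lambda_2$.

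Finally I would convert the bound on $b^2$ into the claimed bound on $\|\pi v_\ep\| = a$. From $a^2 = 1 - b^2 \geq 1 - C\ep$ we get $a \geq \sqrt{1 - C\ep} \geq 1 - C\ep$ (using $\sqrt{1-x} \geq 1-x$ for $0 \le x \le 1$), which is the desired conclusion with $C_1 = C$, valid for all $\ep$ smaller than some $\ep_0$ ensuring $C\ep < 1$ and controlling the implied $\mathcal{O}(\ep^2)$ constants. The main obstacle I anticipate is bookkeeping the $\mathcal{O}(\ep^2)$ error terms carefully enough: one must confirm that the error constants in the expansion of $\|X_\ep\|^2$ and of $\langle X_\ep^T X_\ep v_\ep, v_\ep\rangle$ are uniform in the unit vector $v_\ep$ (which they are, since $X_\ep$ is a fixed analytic family in $\ep$ with bounded coefficients), so that the inequality $2\ep(\lambda_1-\lambda_2)b^2 \leq K\ep^2$ genuinely holds with a single constant $K$. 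The degenerate case where $Y$ is a multiple of the identity (so there is no spectral gap) is trivial, since then $\pi = \mathrm{Id}$ and the bound holds with any $C_1$.
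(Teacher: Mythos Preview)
Your proposal is correct and follows essentially the same approach as the paper: decompose $v_\varepsilon$ into its projection onto the top eigenspace of $Y$ and the orthogonal remainder, bound $\|X_\varepsilon v_\varepsilon\|^2$ from above via the spectral gap $\lambda_1-\lambda_2$, bound it from below by testing against a top eigenvector, and combine to control the remainder. The only cosmetic differences are that the paper works directly with $\|X_\varepsilon v_\varepsilon\|^2$ rather than writing $X_\varepsilon^T X_\varepsilon$, and it does not explicitly discuss uniformity of the $\mathcal{O}(\varepsilon^2)$ constants (a point you address more carefully).
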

\begin{proof} 
Let us simplify notation and write $v = \pi_{} v_{\varepsilon}$ and $w = v_{\varepsilon} - v$. Observe that they are orthogonal and thus
$$\|v\|^2 + \|w\|^2 = 1.$$
We have, expanding up to first order,
\begin{align*}
  \|X_{\varepsilon} v_{\varepsilon}\|^2 = 1 + 2 \varepsilon \left( \left\langle Y v, v\right\rangle + \left\langle Y v, w\right\rangle + \left\langle Y w, v\right\rangle + \left\langle Y w, w\right\rangle\right) + \mathcal{O}(\varepsilon^2),
  \end{align*}
  in which the implicit constant depends on $Y$.
We will now see that several terms simplify. If $Y$ has only one eigenvalue, then the projection $\pi$ is merely the identity and the result follows.
From now on we may suppose that $Y$ has at least two distinct eigenvalues and we use $\lambda_1$ to denote the largest eigenvalue of $Y$ and $\lambda_2 < \lambda_1$ to denote the next largest. 
Then
\begin{align*}
  \left\langle Y v, v\right\rangle &= \lambda_1 \|v\|^2,  \qquad \left\langle Y v, w\right\rangle  =  \left\langle \lambda_1 v, w\right\rangle  = 0\\
\left\langle Yw, v\right\rangle  &=\left\langle w,  Y v\right\rangle = 0, \qquad \left\langle Yw , w\right\rangle \leq \lambda_2 \|w\|^2. 
\end{align*}
Altogether we have
\beq\label{upper}
  \|X_{\varepsilon} v_{\varepsilon}\|^2 \leq 1 + 2\varepsilon(\lambda_1 \|v\|^2 + \lambda_2 \|w\|^2 )+ \mathcal{O}(\varepsilon^2).
  \eeq

We recall that $v_\ep$ was chosen to maximize $  \|X_{\varepsilon} u\|^2 $ over all $\|u\|=1$. In  particular, if $u$ is an eigenvector of $Y$ for $\lam_1$ with $\|u \|=1$, then 
\[  \|X_{\varepsilon} v_{\varepsilon}\|^2  \geq \|X_{\varepsilon} u\|^2  = \| (\mathrm{Id}+\ep Y +  \mathcal{O}(\ep^2))u\|^2 = 1 + 2\ep \lam_1 + \mathcal{O}(\ep^2).\]
Applying this in (\ref{upper}) shows that there is a constant $c>0$ (depending on the implicit constants in the $\mathcal{O}(\ep^2)$ terms, and hence on $Y$) such that as long as $\ep$ is sufficiently small (again relative to the implicit constants in the $\mathcal{O}(\ep^2)$ terms),
$$ \lambda_1 \|v\|^2 + \lambda_2 \|w\|^2  \geq \lambda_1 - c \varepsilon.$$
Using $\|w\|^2 = 1-\|v\|^2$, we obtain
$$ \|v\|^2 \geq 1 - c' \varepsilon$$
where $c' = c/(\lambda_1 - \lambda_2)$ and therefore
$$ \|v\| \geq 1 - C_1 \varepsilon,$$
where $C_1 = c'(1-\ep/4) \geq c'/2$ for all $\ep < \ep_0(Y)$ sufficiently small, for a parameter $\ep_0(Y)$ depending only on $c, \lam_1,\lam_2$, and hence only on $Y$. 
\end{proof}

Our second lemma states rearrangement inequalities for an eigenvector of the largest eigenvalue of $mA+nB$ (motivated by Lemma 1). The argument
is again elementary but the statement itself is so specific that it is presumably new.

\begin{lemma}Let $A,B$ be symmetric and positive semidefinite square matrices such that $\ker(AB - BA) = \emptyset$. Fix $m,n \in \mathbb{N}$ and let $\lam_1$ denote the largest eigenvalue of $mA + nB$. Then  there exists a constant $C_2 = C_2(n,m,A,B)>0$ such that for all vectors $\|v\|=1$ satisfying 
\beq\label{v_prop}
(mA + nB)v = \lambda_1 v
\eeq
we have the inequalities
$$ \left\langle ABA v, v \right\rangle \leq  \left\langle AAB v, v \right\rangle - C_2 \|v\|^2,$$
$$ \left\langle BABv, v \right\rangle \leq \left\langle ABBv, v \right\rangle - C_2 \|v\|^2.$$
\end{lemma}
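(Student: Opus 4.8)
The plan is to reduce each of the two claimed inequalities to a single scalar quantity that can be recognized as a \emph{Rayleigh-quotient defect}, to prove its nonnegativity by an exact algebraic identity, and then to upgrade nonnegativity to a \emph{uniform} strict inequality using the hypothesis $\ker(AB-BA)=\{0\}$ together with a compactness argument. Throughout I would write $L = mA+nB$, so that the constraint (\ref{v_prop}) reads $Lv=\lambda_1 v$ with $\lambda_1$ the largest eigenvalue of the symmetric matrix $L$.

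First I would symmetrize. Since a quadratic form $\langle Mv,v\rangle$ sees only the symmetric part of $M$, the first difference becomes the double commutator
\[ \langle AAB\, v,v\rangle - \langle ABA\, v,v\rangle = \tfrac12\langle (A^2B + BA^2 - 2ABA)v,v\rangle = \tfrac12\langle [A,[A,B]]\,v,v\rangle. \]
The key algebraic observation is that $[A,B]=\tfrac1n[A,L]$, since $[A,L]=[A,mA+nB]=n[A,B]$; hence $[A,[A,B]]=\tfrac1n[A,[A,L]]=\tfrac1n\bigl(A^2L + LA^2 - 2ALA\bigr)$. Now I would insert $Lv=\lambda_1 v$ and use that $L$ is symmetric: two of the three terms collapse to $\lambda_1\|Av\|^2$, while $\langle ALA\,v,v\rangle=\langle L(Av),Av\rangle$. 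This yields the clean identity
\[ \langle AAB\, v,v\rangle - \langle ABA\, v,v\rangle = \tfrac1n\bigl(\lambda_1\|Av\|^2 - \langle L(Av),Av\rangle\bigr), \]
whose right-hand side is exactly the Rayleigh defect of the vector $Av$ against the top eigenvalue of $L$, and is therefore nonnegative.

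The strictness is where the hypothesis genuinely enters, and this is the step I expect to be the main (if modest) obstacle. The defect vanishes precisely when $Av$ again lies in the top eigenspace $E_{\lambda_1}(L)$. But if both $v$ and $Av$ lie in $E_{\lambda_1}(L)$, then $LAv=\lambda_1 Av = A L v$, so $[L,A]v=0$; since $[L,A]=n[B,A]=-n(AB-BA)$, this forces $v\in\ker(AB-BA)$, contradicting $\ker(AB-BA)=\{0\}$ and $\|v\|=1$. Hence the defect is strictly positive for every admissible unit eigenvector $v$. Finally, the set of unit vectors $v$ satisfying (\ref{v_prop}) is the unit sphere of $E_{\lambda_1}(L)$, which is compact, and the defect depends continuously on $v$; it therefore attains a strictly positive minimum, which furnishes the constant in the first inequality (recalling $\|v\|^2=1$). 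The second inequality follows verbatim with the roles of $A$ and $B$ interchanged, now writing $\langle ABB\,v,v\rangle - \langle BAB\,v,v\rangle = \tfrac12\langle [B,[B,A]]v,v\rangle$, using $[B,A]=\tfrac1m[B,L]$ and $[L,B]=m(AB-BA)$; the same nondegeneracy and compactness argument applies, and one takes $C_2$ to be the smaller of the two resulting constants.
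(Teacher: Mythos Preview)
Your proof is correct and is essentially the same as the paper's: both reduce the first difference to the Rayleigh defect $\tfrac{1}{n}\bigl(\lambda_1\|Av\|^2 - \langle (mA+nB)Av,Av\rangle\bigr)$, argue strictness by showing that equality would force $v\in\ker(AB-BA)$, and then invoke compactness of the unit sphere in the top eigenspace. The only cosmetic difference is that you reach the identity via the double-commutator formulation $\tfrac{1}{2}[A,[A,B]]=\tfrac{1}{2n}[A,[A,L]]$, whereas the paper substitutes $Bv=\tfrac{1}{n}(\lambda_1\mathrm{Id}-mA)v$ directly into $\langle ABv,Av\rangle$ and uses the operator inequality $B\le\tfrac{1}{n}(\lambda_1\mathrm{Id}-mA)$; both routes yield the same expression and the same strictness and compactness arguments.
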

\begin{proof} We start by showing the first inequality. We claim
$$ B \leq \frac{\lambda_1 \mbox{Id} - mA}{n}$$
in the sense that
$$ \frac{\lambda_1 \mbox{Id} - mA}{n}  - B$$
is positive semidefinite. Indeed, we have that 
\beq\label{diamond}
  \left\langle \left(\frac{\lambda_1 \mbox{Id} - mA}{n}  - B\right)x, x\right\rangle \geq 0 
\eeq
with equality if and only if $x$ is an eigenvector of $mA + nB$ corresponding to eigenvalue $\lambda_1$.
This holds since $mA + nB$ is symmetric and positive semidefinite and its operator norm thus coincides with its largest eigenvalue. 
We now suppose $v$ with $\|v\|=1$ satisfies (\ref{v_prop}). 
Solving for $Bv$ in $(mA+nB)v = \lambda_1v$, we can rewrite
\beq
 \left\langle AAB v, v \right\rangle =  \left\langle AB v, Av \right\rangle = \left\langle A \left( \frac{\lambda_1 \mbox{Id}  - mA}{n}\right) v, Av \right\rangle = \left\langle \left( \frac{\lambda_1 \mbox{Id}  - mA}{n}\right)A v, Av \right\rangle. \label{diamond2}
\eeq
We now need to compare this to $ \left\langle ABA v, v \right\rangle$ which we can rewrite as
$$ \left\langle ABA v, v \right\rangle  =  \left\langle BA v,Av \right\rangle;$$
subtracting this from (\ref{diamond2}) we see by (\ref{diamond}) that 
\beq\label{diamond3}
   \left\langle AAB v, v \right\rangle-\left\langle ABA v, v \right\rangle= \left\langle \left(\frac{\lambda_1 \mbox{Id} - mA}{n}  - B\right)Av, Av\right\rangle \geq 0 .
   \eeq
Now we aim to show that this inequality is strict if $v$ satisfies (\ref{v_prop}). From our previous observation about (\ref{diamond}), we know that equality holds in this last inequality precisely when $Av$  is an eigenvector of $mA + nB$ corresponding to eigenvalue $\lambda_1$. Suppose this is true. Then
$$ (mA + nB)(Av) = \lambda_1 Av$$
while on the other hand, multiplying our assumption (\ref{v_prop}) by $A$  on the left-hand side shows that
$$ A (mA + nB)v = A \lambda_1 v.$$ 
Subtracting these two identities shows that  $AB v = BAv$, violating our assumption $\ker(AB-BA) = \emptyset$. 
We conclude that $Av$ cannot be an eigenvector for $mA+nB$ corresponding to $\lam_1$, and hence the inequality in (\ref{diamond3}) is strict, for any $\|v\|=1$ satisfying (\ref{v_prop}). By compactness of
the unit ball $\left\{v : \|v\| =1\right\}$, there exists a constant $C_2=C_2(n,m,A,B)>0$ such that
$$ \left\langle ABA v, v \right\rangle \leq  \left\langle AAB v, v \right\rangle - C_2 \|v\|^2,$$
concluding the proof of the first claim.
As for the second inequality, we relabel $A$ and $B$, obtain from the first case that
$$ \left\langle BAB v, v \right\rangle \leq  \left\langle BBA v, v \right\rangle - C_2 \|v\|^2,$$
and note that $\left\langle BBA v, v \right\rangle = \left\langle A v, BBv \right\rangle = \left\langle ABB v, v \right\rangle$.

\end{proof}

\subsection{Conclusion of the proof of  Theorem 2}

We are now ready to prove Theorem 2. We recall from the beginning of \S \ref{sec_thm_2} that we consider a particular word $X_\ep=W_{m,n}(1+\ep A,1+\ep B)$, with sequences of exponents $(m_1,n_1,\ldots, m_s,n_s)$ and $m = \sum_i m_i$, $n = \sum_i n_i$. We let $Z_\ep$ denote $(1+\ep A)^m (1+\ep B)^n$. We  choose a vector $v_\ep$ with $\|v_\ep \|=1$ that maximizes 
$\| X_\ep v_\ep \|^2 = \langle X_\ep  v_\ep , X_\ep v_\ep \rangle,$
and it suffices to show  that $\|X_\ep v_\ep \|^2 \leq \| Z_\ep v_\ep \|^2$,  as explained in (\ref{main_goal_app}).
We will expand the unordered product $\| X_\ep v_\ep \|^2 $ and the ordered product $\| Z_\ep v_\ep \|^2 $ up to the third term, with respect to $\ep$. Lemma 1 will restrict the types of vectors we will have to study, Lemma 2 will give us a strict inequality in the third order terms, and the desired inequality will follow from that.

Precisely, in the above setting we will prove that there exist positive constants $C_1,C_2,\ep_0$ depending on $A,B,m,n$ such that for all $\ep< \ep_0$,
\beq\label{key_ineq}
 \| Z_\ep v_\ep \|^2  -  \| X_\ep v_\ep \|^2  \geq \varepsilon^3 C_2(1 - C_1\ep)^2 + \mathcal{O}(\ep^4).
\eeq
Here the implicit constant depends on $A,B,m,n$.
Consequently, for all sufficiently small $\ep$, the left-hand side is in fact strictly positive, and Theorem 2 follows.

A simple expansion shows that
$$ X_{\varepsilon} = \mbox{Id} + \varepsilon X_1+ \ep^2 X_2 + \ep^3 X_3 + \mathcal{O}(\varepsilon^4)$$
$$ Z_{\varepsilon} =  \mbox{Id} + \varepsilon Z_1 + \ep^2 Z_2 + \ep^3 Z_3 + \mathcal{O}(\varepsilon^4),$$
where 
\[ X_1 = mA + nB = Z_1\]
and, for combinatorial coefficients $a_i \in \N$ depending only on the sequences of exponents $m_i$ and $n_i$,
\begin{align*}
 X_2 &= a_1 AA + a_2 AB + a_3 BA + a_4 BB, \\
 Z_2 &= a_1 AA + (a_2+a_3) AB + a_4 BB,
\end{align*}
and 
\begin{align*}
 X_3 &= a_5 AAA + a_6 AAB + a_7 ABA + a_8 BAA + a_9 ABB + a_{10} BAB +a_{11} BBA + a_{12} BBB, \\
 Z_3 &= a_5 AAA + (a_6+a_7 + a_8) AAB + (a_9+a_{10}+a_{11})ABB + a_{12}BBB.
\end{align*}
Thus an expansion up to third order shows that for any $u$,
\begin{align}
\| X_\ep u\|^2 = \left\langle X_\ep u, X_\ep u \right\rangle &= \|u\|^2 + 2\varepsilon \left\langle X_1 u, u \right\rangle \nonumber \\
&+ 2\varepsilon^2 \left\langle X_2 u, u\right\rangle + \varepsilon^2 \left\langle  X_1 u, X_1 u \right\rangle  \nonumber \\
&+ 2 \varepsilon^3 \left\langle X_3 u, u \right\rangle + 2\varepsilon^3 \left\langle X_2 u, X_1 u \right\rangle + \mathcal{O}(\varepsilon^4) \label{X_exp}
\end{align}
and
\begin{align}
\| Z_\ep u\|^2 = \left\langle Z_\ep u, Z_\ep u \right\rangle &= \|u\|^2 + 2\varepsilon \left\langle Z_1 u, u \right\rangle  \nonumber\\
&+ 2\varepsilon^2 \left\langle Z_2 u, u\right\rangle + \varepsilon^2 \left\langle  Z_1 u, Z_1 u \right\rangle  \nonumber \\
&+ 2 \varepsilon^3 \left\langle Z_3 u, u \right\rangle + 2\varepsilon^3 \left\langle Z_2 u, Z_1 u \right\rangle + \mathcal{O}(\varepsilon^4). \label{Z_exp}
\end{align}

We now use $v_{\varepsilon}$ to denote the vector maximizing $\| X_{\varepsilon} u \|$ among all $\|u \|=1$,
and we aim to show the inequality (\ref{key_ineq}) for 
\beq\label{key_diff}  \left\langle Z_\ep v_\ep, Z_\ep v_\ep \right\rangle - \left\langle X_\ep v_\ep, X_\ep v_\ep \right\rangle,
\eeq
 using the above expansions (\ref{X_exp}) and (\ref{Z_exp}).

We will first see that terms in this difference that are at most second order in $\ep$ cancel exactly, in fact for any $u$. 
Indeed, the term of order 0 in $\varepsilon$, that is $\|u\|^2$, cancels and, since $X_1 = Z_1$, so does the term of order $\varepsilon$. 
Next, for the second order terms, for any vector $u$,
\begin{align*}
 \left\langle Z_2 u, u\right\rangle -  \left\langle X_2 u, u\right\rangle &= a_3 \left\langle ABu, u\right\rangle - a_3 \left\langle BA u, u \right\rangle  \\
&=  a_3 \left\langle Bu, Au\right\rangle -a_3 \left\langle A u, Bu \right\rangle= 0.
\end{align*}
The other terms of second order,  $\left\langle  Z_1 u, Z_1 u \right\rangle$ and $\left\langle  X_1 u, X_1 u \right\rangle$ again coincide trivially (and hence cancel in the difference) because $X_1 = mA + nB = Z_1$.
This shows that for any $u$, the terms of at most second order (with respect to $\ep$) cancel in  the difference (\ref{key_diff}). \\ 

We now analyze the third order terms in the difference (\ref{key_diff}), which include terms of two types, namely
\begin{align*}
 \left\langle Z_3 u,u\rangle -\langle X_3 u, u\right\rangle &= a_7 (\left\langle AABu, u\right\rangle - \left\langle ABAu, u\right\rangle ) + a_8 (\left\langle AABu, u\right\rangle - \left\langle BAAu, u\right\rangle ) \nonumber \\
	& \qquad+ a_{10} (\left\langle ABBu, u\right\rangle - \left\langle BABu, u\right\rangle ) + a_{11} (\left\langle ABBu, u\right\rangle - \left\langle BBAu, u\right\rangle ) 
\end{align*}
and
\beq\label{type2}
 \left\langle Z_2 u,Z_1 u\rangle -\langle X_2 u, X_1 u\right\rangle = a_3 \left\langle (AB - BA)u, (mA + nB)u \right\rangle .
\eeq
For the first type of term, we can use the fact that $\langle AAB u,u \rangle = \langle ABu,Au \rangle = \langle Au,ABu\rangle = \langle BAAu,u\rangle$
to see the terms corresponding to $a_8$ vanish, and similarly for $a_{11}$, so that
\beq\label{type1}
 \left\langle Z_3 u,u\rangle -\langle X_3 u, u\right\rangle =  a_7 (\left\langle AABu, u\right\rangle - \left\langle ABAu, u\right\rangle ) + a_{10} (\left\langle ABBu, u\right\rangle - \left\langle BABu, u\right\rangle ). 
 \eeq
Altogether, we obtain that for any $u$, the third order contributions of the difference (\ref{key_diff}) are given by
\begin{align}
\mbox{third order} &= 2 \varepsilon^3 a_7  \left( \left\langle AABu, u\right\rangle - \left\langle ABAu,u\right\rangle\right) + 2 \varepsilon^3 a_{10}  (\left\langle ABBu, u\right\rangle - \left\langle BABu, u\right\rangle ) \nonumber \\
& \qquad + 2\varepsilon^3 a_3 \left\langle (AB - BA)u, (mA + nB)u \right\rangle. \label{types}
\end{align}

Now we specialize to considering $u=v_\ep$ with $\|v_\ep\|=1$ that maximizes $\|X_\ep v_\ep \|^2$. We apply Lemma 1 to conclude that there is an $\ep_0 = \ep_0(mA+nB)>0$ and a constant $C_1 = C_1(mA+nB)>0$ such that for every $\ep< \ep_0 $, we can write 
$$v_\ep = v + w,$$ 
where $v$ is the projection of $v_\ep$ onto the eigenspace corresponding to the largest eigenvalue of $mA + nB$, and $v,w$ have the following properties: $\|v\| \geq 1 - C_1\ep$ and $v$ is orthogonal to $w$, so that $\|w\| \leq C_1 \ep$. (We note that both $v$ and $w$ also depend on $\varepsilon$ but suppress this for simplicity of notation). 
In (\ref{type2}) we see that 
\begin{align*}
 \left\langle (AB - BA)v_{\varepsilon}, (mA + nB)(v+w) \right\rangle &= \lambda_1 \left\langle (AB - BA)v_{\varepsilon},v\right\rangle + \left\langle (AB - BA)v_{\varepsilon}, (mA+nB)w\right\rangle \\
&=  \lambda_1 \left\langle (AB - BA)v_{\varepsilon},v\right\rangle + \mathcal{O}(\varepsilon)\\
&= \lambda_1 \left\langle (AB - BA)v,v\right\rangle + \mathcal{O}(\varepsilon)\\
&= \mathcal{O}(\varepsilon),
\end{align*}
since the first term vanishes; thus this type of term contributes 
$$ 2\varepsilon^3 a_3 \left\langle (AB - BA)v_\ep, (mA + nB)v_\ep \right\rangle = \mathcal{O}(\varepsilon^4)$$
to (\ref{types}).
A similar expansion for the other terms (\ref{type1}) shows that
$$  \left\langle Z_3 v_\ep, v_\ep \rangle - \langle X_3 v_\ep, v_\ep\right\rangle  =  \langle (Z_3-X_3) v,v \rangle  + \mathcal{O}(\ep)$$
with an implicit constant depending on $A,B,m,n$.
Now that we have restricted to an inner product involving only $v$,  we apply (\ref{type1}) for the vector $v$ and note that Lemma 2 implies that 
\[  \left\langle (Z_3 - X_3 )v, v\right\rangle \geq C_2(a_7+a_{10}) \|v\|^2 \geq C_2 \|v\|^2 \geq C_2(1 - C_1\ep)^2, \]
with the constant $C_2$ provided by the lemma. This is \emph{strictly positive} for all $\ep$ sufficiently small with respect to $C_1$. 
To conclude, we have proved (\ref{key_ineq}), and 
this completes the proof of Theorem 2.

%

\end{document}